\theoremstyle{plain}
\newtheorem{theorem}{Theorem}
\newtheorem*{theorem*}{Theorem}
\newtheorem{proposition}[theorem]{Proposition}
\newtheorem*{proposition*}{Proposition}
\newtheorem{corollary}[theorem]{Corollary}
\newtheorem*{corollary*}{Corollary}
\newtheorem{lemma}[theorem]{Lemma}
\newtheorem*{lemma*}{Lemma}
\newtheorem*{observation*}{Observation}
\newtheorem*{conjecture*}{Conjecture}
\newtheorem*{question*}{Question}
\newtheorem*{questions*}{Questions}
\newtheorem*{problem*}{Problem}
\newtheorem*{problems*}{Problems}
\theoremstyle{definition}
\newtheorem{definition}[theorem]{Definition}
\newtheorem{example}[theorem]{Example}
\newtheorem*{exercise*}{Exercise}
\theoremstyle{remark}
\newtheorem*{remark*}{Remark}
\newtheorem*{remarks*}{Remarks}
\newtheorem*{claim*}{Claim}
\newcommand{\subclass}[1]{}
\newcommand{\enumTi}[1]{\renewcommand{\theenumi}{#1}}
\newcommand{\alphenumi}{\enumTi{\alph{enumi}}}
\newcommand{\romenumi}{\enumTi{\roman{enumi}}}
\newlength{\hspaceforlengthglumpf}
\DeclareMathOperator{\codim}{codim}
\DeclareMathOperator{\supp}{supp}
\DeclareMathOperator{\tr}{tr}
\newcommand{\Id}{\mathrm{Id}}
\DeclareMathOperator{\img}{im}
\DeclareMathOperator{\spn}{span}
\DeclareMathOperator{\rk}{rk}
\newcommand{\orth}{\bot}
\newcommand{\lt}{\left}
\newcommand{\rt}{\right}
\newcommand{\Tp}{{\mspace{-1mu}\scriptscriptstyle\top\mspace{-1mu}}}
\newcommand{\CC}{\mathbb{C}}
\newcommand{\MM}{\mathbb{M}}
\newcommand{\NN}{\mathbb{N}}
\newcommand{\RR}{\mathbb{R}}
\newcommand{\kk}{\mathbb{k}}
\newcommand{\Nm}[1]{{\lt\lVert #1 \rt\rVert}}
\newlength{\algotabbingwidth}
\numberwithin{theorem}{section}
\DeclareMathOperator{\psdrk}{rk_{\varoplus}}
\DeclareMathOperator{\embrkl}{rk_{\star}}
\newcommand{\poset}{\mathcal P}
\newcommand{\otherposet}{\mathcal Q}
\newcommand{\setofposets}{\mathscr P}
\newcommand{\setofsubspacelattices}{\mathscr L}
\newcommand{\upperCone}{\tilde Q}
\newcommand{\smallCone}{Q_0}
\newcommand{\bigCone}{Q_1}
\begin{document}

\title[Support-based lower bounds for positive semidefinite rank]{Support-based lower bounds for the positive semidefinite rank of a nonnegative matrix}%
\author{Troy Lee}%
\address{Troy Lee: Centre for Quantum Technologies\\
  National University of Singapore\\
  Singapore 117543}
\email{troyjlee@gmail.com}%
\author{Dirk Oliver Theis}%
\address{Dirk Oliver Theis: Institute of Computer Science\\
  University of Tartu\\
  J.~Liivi 2, 50409 Tartu, Estonia}%
\email{dotheis@ut.ee {\tiny\href{http://dirkolivertheis.blogspot.com}{http://dirkolivertheis.blogspot.com}}}%
\subjclass[2000]{Primary 15B48; Secondary 90C22.}%
\date{Sat Nov 16 01:22:55 EET 2013}%
\begin{abstract}
%
%
%
%
%
%
%
%
%
  The positive semidefinite rank of a nonnegative $(m\times n)$-matrix~$S$ is the minimum number~$q$ such that there exist positive semidefinite $(q\times
  q)$-matrices $A_1,\dots,A_m$, $B_1,\dots,B_n$ such that $S(k,\ell) = \tr A_k^*B_\ell$.
  \\
  The most important lower bound technique on nonnegative rank only uses the zero/non-zero pattern of the matrix.  We characterize the power of lower bounds on
  positive semidefinite rank based on the zero/non-zero pattern.
  \\[1ex]
  \textbf{Keywords:} Factorization rank; positive semidefinite rank; lower bounds on factorization ranks; poset embedding.
  %
\end{abstract}
\maketitle




\section{Introduction}

In this paper, $\kk$ is a subfield of the field~$\CC$ of complex numbers.  For a matrix~$A$ over~$\kk$, we denote its entries by $A(k,\ell)$.  As usual,
$A^*(k,\ell) = \overline{A(\ell,k)}$ is the Hermitian transpose, and $A$ is positive semidefinite if $A$ is Hermitian and all  
eigenvalues are nonnegative.  We let $\kk_+ := \kk\cap \RR_+$ denote the nonnegative numbers in~$\kk$.  A matrix is nonnegative if all its entries are nonnegative.

Let $S$ be an $m\times n$ nonnegative matrix over~$\kk$.  The \textit{nonnegative rank} of~$S$, denoted by $\rk_+(S)$ is the smallest number~$q$ such that there
exists a \textit{nonnegative factorization} of~$S$ of size~$q$, i.e., vectors $\xi_1,\dots,\xi_m, \eta_1,\dots,\eta_n \in \kk_+^q$ such that $S(k,\ell) = \lt( \xi_k
\mid \eta_\ell \rt)$, where the latter is the standard inner product in~$\kk^q$.  Similarly, the \textit{positive semidefinite rank} of~$S$, denoted by $\psdrk(S)$,
is the smallest number~$q$ such that there exists a \textit{positive semidefinite factorization} of~$S$ of size~$q$, i.e., positive semidefinite $(q\times
q)$-matrices $A_1,\dots,A_m, B_1,\dots,B_n$ such that $S(k,\ell) = \tr(A_k^*B_\ell)$, the latter expression being the usual inner product of two square matrices.
These two definitions are examples of the concept of \textit{factorization rank,} where one wishes to write the entries of a matrix~$S$ as inner products of vectors
in some Hilbert space, with diverse restrictions on the set of vectors which are allowed.  

The nonnegative rank is a well-known concept in Matrix Theory, see e.g.~\cite{Markham72,GregoryPullman83,CohenRothblum93}.  Generalizations to other types of
factorizations are of interest there, too, see e.g.~\cite{CohenRothblum93,BeasleyLaffey09}.  In~\cite{BeasleyLaffey09}, the factors $\xi_k$ and $\eta_\ell$ are
required to be in $R^q$, where $R$ is some fixed semiring, e.g., a sub-semiring of $\RR_+$.  To the best of our knowledge, replacing $R^q$ by a cone (in some inner
product space over an ordered field) which is not a product of 1-dimensional cones appears to be a new concept initiated by Gouveia, Parrilo, and
Thomas~\cite{GouveiaParriloThomas12}.

There is a beautiful connection between (1) factorization ranks, (2) linear mappings between convex cones, and (3) combinatorial optimization, which was first noted
by Yannakakis~\cite{Yannakakis91} in 1991 for the nonnegative rank, and later extended by Gouveia, Parrilo, and Thomas~\cite{GouveiaParriloThomas12}.  Driven by
these connections, the last several years have seen a surge of interest in factorization ranks, particularly the nonnegative rank, and recently also the positive
semidefinite rank.  As far as the link to combinatorial optimization is concerned, bounds---upper or lower---on the nonnegative or positive semidefinite rank
provide corresponding bounds on the sizes of linear programming or semidefinite programming formulations of problems.  Finding lower bounds on these factorization
ranks is a difficult task, and draws on methods from combinatorial matrix theory and communication complexity.

For the nonnegative rank, the easiest, most successful, and more or less only method (for an exception 
see~\cite{GillisGlineur11}) for obtaining lower bounds just considers the
support of the matrix.  The \textit{support} of~$S$ is the matrix obtained from~$S$ by replacing every non-zero entry by~$1$.  For an $m \times n$ matrix $S$ whose
support is~$M$, the best lower bound obtainable by considering only the support is
\begin{equation*}
    \min \bigl\{  \rk_+(T)   \bigm|   \supp(T) = M \text{, }\  T \ge 0  \bigr\}.
\end{equation*}
This turns out to be equal to the \textit{Boolean rank} of $M$~\cite{GregoryPullman83}, the smallest $r$ such that there are $r$ dimensional binary vectors $x_1,
\ldots, x_m \in \{0,1\}^r$ and $y_1, \ldots, y_n \in \{0,1\}^r$ satisfying $M(k, \ell) = \vee_{j=1}^r x_k(j) y_\ell(j)$.  The Boolean rank arises in many contexts,
and is also known as \textit{rectangle covering number}~\cite{FioriniKaibelPashkovichTheis13}, \textit{biclique covering number}~\cite{Orlin77} or, after taking
$\log_2$, \textit{nondeterministic communication complexity}~\cite{Yannakakis91}.  Most lower bounds on nonnegative rank actually lower bound the Boolean rank,
including for the recent result showing superpolynomial lower bounds on the size of linear programming formulations of the traveling salesman
problem~\cite{FioriniMassarPokuttaTiwaryDewolf2012}.  Notable exceptions to this rule include results of \cite{Yannakakis91}
and~\cite{KaibelPashkovichTheis10,KaibelPashkovichTheis12}.

%

This paper deals with the question of giving lower bounds for the positive semidefinite rank.  Given the situation for nonnegative rank, it is natural to ask the
following question.

\begin{question*}
  How good can support-based lower bounds for positive semidefinite rank be?
\end{question*}

In the case of the nonnegative rank, there are plenty of examples where the Boolean rank is exponential in the rank.  Moreover, it is not difficult to see that even
the Boolean rank of the support of a rank-3 matrix can be unbounded~\cite{CohenRothblum93}.  In the case of the positive semidefinite rank, we will see that this is
not the case: the best possible support-based lower bound for the positive semidefinite rank coincides with the minimum rank over all matrices with the same
support.


\begin{theorem}\label{thm:main}
  For all 0/1-matrices $M$, we have
  \begin{equation*}
    \min \bigl\{  \psdrk(T)   \bigm|   \supp(T) = M \text{, }\  T \ge 0  \bigr\}
    =
    \min \bigl\{  \rk(T) \bigm| \supp(T) = M \bigr\}
  \end{equation*}
\end{theorem}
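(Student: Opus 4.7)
The plan is to prove the two inequalities separately.

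For $\min\psdrk \le \min\rk$, I start from any matrix $T$ with $\supp(T)=M$ and $\rk(T)=r$ attaining the minimum rank. Factor $T=UV$ with $U\in\kk^{m\times r}$, $V\in\kk^{r\times n}$, and let $u_k\in\kk^r$ be the $k$th row of $U$ (as a column) and $v_\ell\in\kk^r$ the $\ell$th column of $V$, so that $T(k,\ell)=u_k^\top v_\ell$. Define the nonnegative matrix $T'$ by $T'(k,\ell):=|T(k,\ell)|^2$; clearly $\supp(T')=\supp(T)=M$. A direct expansion gives
\[
    T'(k,\ell)\;=\;|u_k^\top v_\ell|^2\;=\;\tr\bigl((u_k u_k^*)(\bar v_\ell v_\ell^\top)\bigr),
\]
so the rank-one positive semidefinite matrices $A_k:=u_k u_k^*$ and $B_\ell:=\bar v_\ell v_\ell^\top$ form a size-$r$ psd factorization of $T'$. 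Hence $\psdrk(T')\le r$.

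For $\min\rk\le\min\psdrk$, take $T\ge 0$ with $\supp(T)=M$ attaining $q=\psdrk(T)$ via a size-$q$ psd factorization $T(k,\ell)=\tr(A_k B_\ell)$. The key observation is that for positive semidefinite $A, B$, $\tr(AB)=0$ if and only if $AB=0$ (indeed $\tr(AB)=\lVert A^{1/2}B^{1/2}\rVert_F^2$, computed over $\CC$). Hence $A_k B_\ell = 0$ precisely when $M(k,\ell)=0$. Pick vectors $u,v\in\kk^q$ and define
\[
    T''(k,\ell)\;:=\;u^* A_k B_\ell v \;=\; (A_k u)^*(B_\ell v).
\]
Since $A_k u,\, B_\ell v \in \kk^q$, this expresses $T''$ as a product of an $m\times q$ and a $q\times n$ matrix over $\kk$, so $\rk(T'')\le q$. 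It remains to choose $u,v$ so that $T''(k,\ell)\ne 0$ whenever $A_k B_\ell \ne 0$: for each such $(k,\ell)$, $u^* A_k B_\ell v$ is a nonzero polynomial in $(u,\bar u,v)$, and since $\kk$ is infinite, a generic $(u,v)\in\kk^{2q}$ avoids the finite union of resulting zero loci, giving $\supp(T'')=M$.

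I expect the main subtlety to be the genericity argument in the second direction, together with the observation that the trace-vanishing for psd matrices forces the matrix product itself to vanish; the remaining steps are direct linear algebra. Combining the two inequalities yields the theorem.
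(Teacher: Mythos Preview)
Your proof is correct. The direction $\min\rk \le \min\psdrk$ is essentially the paper's argument (Lemma~4.1 via the ``support realization'' Lemma~4.6): both use that $\tr(A_kB_\ell)=0\iff A_kB_\ell=0$ for positive semidefinite matrices, and then evaluate $A_kB_\ell$ on generic vectors to obtain a rank-$q$ matrix with the correct support. The only cosmetic difference is that you use one pair $(u,v)$ for all rows and columns, whereas the paper allows a separate pair $(\xi_k,\eta_\ell)$ for each; your version is in fact a bit cleaner.

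For the direction $\min\psdrk \le \min\rk$ you take a genuinely shorter route than the paper. The paper factors this inequality through the subspace-lattice embedding rank $\embrkl$, proving $\min\rk \ge \embrkl$ (Lemma~4.2, via row spaces) and then $\embrkl \ge \min\psdrk$ (Lemma~4.3, via orthogonal projections onto the embedding subspaces). Your argument bypasses this: from a rank-$r$ factorization $T(k,\ell)=u_k^\top v_\ell$ you directly produce the nonnegative matrix $T'=|T|^{\circ 2}$ together with an explicit rank-one positive semidefinite factorization $A_k=u_ku_k^*$, $B_\ell=\bar v_\ell\bar v_\ell^{\,*}$. This is more elementary and self-contained. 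What the paper's detour buys is Proposition~3.4: the characterization of the support-based bound as a poset-embedding rank into subspace lattices, which is of independent interest and is proved \emph{en passant}. If one only cares about Theorem~1.1, your construction is the quicker path.

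One small caveat: your matrices $A_k=u_ku_k^*$ and $B_\ell=\bar v_\ell v_\ell^\top$ have entries in $\kk$ only when $\kk$ is closed under complex conjugation (which covers the cases $\kk\in\{\QQ,\RR,\CC\}$ the paper has in mind). Over such $\kk$ your argument goes through without change.
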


The theorem answers completely the question what lower bound information can be gained about the positive semidefinite rank from the zero/non-zero pattern of a
nonnegative matrix: the best possible bound is the minimum possible rank of a matrix with the given zero/non-zero pattern.  De Wolf~\cite{deWolf03} calls this
number the \textit{nondeterministic rank,} and shows that the logarithm of the nondeterministic rank characterizes nondeterministic \textit{quantum} communication
complexity.  We therefore have the pleasing parallel that the logarithm of the best support based lower bound for nonnegative rank is the nondeterministic
communication complexity, while the logarithm of the best support based lower bound on positive semidefinite rank is the nondeterministic quantum communication
complexity.

In the situation of the nonnegative rank, there is a connection between the Boolean rank and embeddings of posets: The Boolean rank of~$M$ is the minimum number of
co-atoms of a truncated Boolean lattice into which a certain poset defined by~$M$ can be embedded.  We prove a corresponding statement for the best-possible
support-based lower bound for the positive semidefinite rank in Section~\ref{sec:posets}.



\section{Factorizations}

There is a well-known connection between linear mappings between cones and factorizations of corresponding matrices.  In this section, let $\kk$ be a subfield of
the field~$\RR$ of real numbers.  Let $S$ be a non-negative matrix, and suppose that $S = AX$ for an $(m\times d)$-matrix~$A$ and an a $(d\times n)$-matrix~$X$,
both of rank~$d$.  In other words, we are given a rank-$d$ factorization of~$S$.  Let $\smallCone \subseteq \kk^d$ be the polyhedral cone generated by the columns
of~$X$, and denote by $\bigCone$ the polyhedral cone $\{x\in\kk^d \mid Ax \ge 0\}$.  Clearly, since $S \ge 0$, we have $\smallCone \subseteq \bigCone$.  The rank
condition on $A$ and~$X$ is equivalent to $\smallCone$, $\bigCone$ having dimension~$d$.

A \textit{linear extension} of $\smallCone\subseteq \bigCone$ of size $q$ is a polyhedral cone~$\upperCone$ in some $\kk^s$ with~$q$ facets for which there exists a
linear mapping $\pi \colon \kk^s \to \kk^d$ such that $\smallCone\subseteq \pi(\upperCone) \subseteq \bigCone$.
The following is a well-known fact, going back to Yannakakis.

\begin{theorem}[\cite{Yannakakis91}, c.f.~\cite{FioriniKaibelPashkovichTheis13}]\label{thm:nnegrk-lext}
  The minimum size of a linear extension of $\smallCone\subseteq \bigCone$ equals the nonnegative rank of~$S$.
\end{theorem}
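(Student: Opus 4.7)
The plan is to prove both inequalities in the claimed equality. For the direction $\rk_+(S) \le q^{*}$, where $q^{*}$ denotes the minimum size of a linear extension, I would start from a linear extension: write $\upperCone = \{y \in \kk^s : c_j^\Tp y \ge 0,\ j=1,\dots,q\}$ for facet-defining functionals $c_j$, together with $\pi \colon \kk^s \to \kk^d$ satisfying $\smallCone \subseteq \pi(\upperCone) \subseteq \bigCone$. The right inclusion places each functional $y \mapsto a_k^\Tp \pi(y)$ (where $a_k^\Tp$ is the $k$-th row of $A$) in the dual cone of $\upperCone$, so Farkas' lemma for polyhedral cones supplies nonnegative scalars $\lambda_{k,j}$ with $a_k^\Tp \pi = \sum_j \lambda_{k,j}\, c_j^\Tp$. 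The left inclusion supplies, for each column $X_\ell$ of $X$, a preimage $y_\ell \in \upperCone$ with $\pi(y_\ell) = X_\ell$. Then
\begin{equation*}
  S(k,\ell) \;=\; a_k^\Tp X_\ell \;=\; a_k^\Tp \pi(y_\ell) \;=\; \sum_{j=1}^{q} \lambda_{k,j}\,\bigl(c_j^\Tp y_\ell\bigr)
\end{equation*}
reads off a size-$q$ nonnegative factorization $S = UV$ with $U(k,j) := \lambda_{k,j}$ and $V(j,\ell) := c_j^\Tp y_\ell$, both visibly in $\kk_+$.

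For the converse $q^{*} \le \rk_+(S)$, I would take any nonnegative factorization $S = UV$ with $U \in \kk_+^{m \times q}$, $V \in \kk_+^{q \times n}$, $q = \rk_+(S)$, and exhibit the candidate
\begin{equation*}
  \upperCone := \bigl\{(x,y) \in \kk^d \times \kk^q \bigm| Ax = Uy,\ y \ge 0\bigr\}, \qquad \pi(x,y) := x.
\end{equation*}
The inclusion $\pi(\upperCone) \subseteq \bigCone$ is immediate, because $Ax = Uy \ge 0$ whenever $U, y \ge 0$. For $\smallCone \subseteq \pi(\upperCone)$, each column $X_\ell$ of $X$ lifts to $(X_\ell, V_\ell) \in \upperCone$, since $AX_\ell = S_\ell = UV_\ell$ and $V_\ell \ge 0$. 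Inside the linear subspace $\{(x,y) : Ax = Uy\} \subseteq \kk^d \times \kk^q$, the cone $\upperCone$ is cut out solely by the $q$ orthant inequalities $y_j \ge 0$, so it has at most $q$ facets; any shortfall caused by redundancy is absorbed by taking the direct product with an auxiliary $\kk_+^{q - q'}$ factor and extending $\pi$ by zero on the new coordinates, producing a linear extension of size exactly $q$ whose image is unchanged.

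The main technical point I expect to need care with is the Farkas step in the first direction, carried out over a general subfield $\kk \subseteq \RR$: one must verify that the multipliers $\lambda_{k,j}$ can be chosen in $\kk_+$ rather than merely in $\RR_+$. This should follow because the $\lambda_{k,j}$ arise as a feasible solution of a linear system whose coefficients lie in $\kk$, so a basic feasible solution has coordinates in $\kk$. The facet-counting issue in the converse direction is a minor bookkeeping step, resolved by the padding construction above.
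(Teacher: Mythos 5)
The paper gives no proof of this statement---it is quoted as a known fact from Yannakakis (cf.\ Fiorini--Kaibel--Pashkovich--Theis)---and your argument is precisely the standard one from those sources: Farkas/duality turns the inclusion $\pi(\upperCone)\subseteq\bigCone$ into nonnegative multipliers and the inclusion $\smallCone\subseteq\pi(\upperCone)$ into nonnegative slacks, while the homogenized slack lift $\{(x,y) : Ax=Uy,\ y\ge 0\}$ gives the converse. Both directions are correct; the only cosmetic omission is that if $\upperCone$ is not full-dimensional its minimal description also contains equations, whose multipliers contribute nothing when evaluated at $y_\ell\in\upperCone$, so the factorization is unaffected.
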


A \textit{positive semidefinite extension} of $\smallCone\subseteq \bigCone$ of size $q$ is the intersection~$\upperCone$ of a linear subspace of some $\MM(q\times
q)$ with the set of all positive semidefinite $(q\times q)$-matrices, for which there exists a linear mapping $\pi \colon \kk^s \to \kk^d$ such that
$\smallCone\subseteq \pi(\upperCone) \subseteq \bigCone$.  The following fact is a straightforward generalization of a recent result by Gouveia, Parrilo, and
Thomas.

\begin{theorem}[\cite{GouveiaParriloThomas12}, c.f.~\cite{TheisBLOG2012:05:20}]\label{thm:psdrk-psdext}
  The minimum size of a positive semidefinite extension of $\smallCone\subseteq \bigCone$ equals the positive semidefinite rank of~$S$.
\end{theorem}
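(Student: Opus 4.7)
My plan is to prove the equality via two inequalities, extending the slack/lift argument of Gouveia--Parrilo--Thomas from polytope slack matrices to the present cone-pair setting. For the direction that the minimum PSD-extension size is at most $\psdrk(S)$, I start from a PSD factorization $S(k,\ell) = \tr(U_k V_\ell)$ with PSD $(q \times q)$-matrices $U_k, V_\ell$, working inside the Hermitian space $H \subseteq \mathbb{M}(q\times q)$ with its PSD cone $H_+$. Consider the linear map $\phi \colon H \to \kk^m$, $\phi(M) := (\tr(U_k M))_{k=1}^m$; then $\phi(V_\ell) = A X_\ell$ lies in the $d$-dimensional image of $A$ (writing $X_\ell$ for the $\ell$-th column of $X$). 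Setting $L := \phi^{-1}(\img A)$ and using that $A$ has rank $d$, I define $\pi \colon L \to \kk^d$ as the unique linear map satisfying $A\pi = \phi|_L$. The claimed extension is $\upperCone := L \cap H_+$, of size $q$: indeed $\pi(V_\ell) = X_\ell$ gives $\smallCone \subseteq \pi(\upperCone)$, while $\phi(M) \ge 0$ for $M \succeq 0$ (because each $U_k \succeq 0$) forces $A\pi(M) = \phi(M) \ge 0$, i.e., $\pi(M) \in \bigCone$.

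For the reverse inequality, let $\upperCone = L \cap H_+$ be a PSD extension of size $q$ with linear map $\pi \colon L \to \kk^d$ and $\smallCone \subseteq \pi(\upperCone) \subseteq \bigCone$. For each $\ell$, choose $V_\ell \in \upperCone$ (hence PSD) with $\pi(V_\ell) = X_\ell$. For each $k$, the linear functional $f_k \colon L \to \kk$ defined by $M \mapsto A_{k,\cdot}\,\pi(M)$ is nonnegative on $\upperCone$, because $A\pi(M) \ge 0$ there. The main step is then to produce $U_k \succeq 0$ with $\tr(U_k M) = f_k(M)$ for every $M \in L$; granted this, $\tr(U_k V_\ell) = A_{k,\cdot}\,X_\ell = S(k,\ell)$, completing a PSD factorization of the required size.

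The main obstacle is this representation step, which amounts to the conic-duality statement that the orthogonal projection of $H_+$ onto $L$ equals the dual cone of $L \cap H_+$ inside $L$. The easy inclusion is immediate (both $P\succeq 0$ and its projection pair nonnegatively with any element of $L \cap H_+$); the reverse requires a separation argument together with a closedness condition, and in general projections of the PSD cone need not be closed. I plan to handle this by first replacing $L$ with $\spn(\upperCone)$, which leaves $\upperCone$ and the size $q$ unchanged and forces $\upperCone$ to have nonempty relative interior in $L$, thus yielding a Slater-type condition and strong conic duality. A final bookkeeping point is to verify that the $U_k$ obtained can be chosen with entries in $\kk$ rather than merely in $\RR$.
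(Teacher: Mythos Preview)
The paper does not give its own proof of this theorem: it is stated with a citation to \cite{GouveiaParriloThomas12} (and a blog post) and used as background. So there is no in-paper argument to compare against; your outline is essentially the Gouveia--Parrilo--Thomas lift/section argument, transported from polytope slack matrices to the cone-pair setting $\smallCone\subseteq\bigCone$, which is exactly what the citations point to.

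Your plan is sound in structure and you have correctly located the one nontrivial step: representing each linear functional $f_k$ that is nonnegative on $\upperCone=L\cap H_+$ by a genuine positive semidefinite matrix $U_k$, i.e., showing $(L\cap H_+)^{*}=L^{\perp}+H_+$ without closure. Your proposed fix, however, is not quite enough. Replacing $L$ by $L':=\spn(\upperCone)$ makes $\upperCone$ full-dimensional in $L'$, but that is \emph{not} the Slater condition: Slater asks for a point of $L'$ lying in the interior of $H_+$ inside the ambient Hermitian space, and $L'$ may sit entirely in the boundary of $H_+$ (take $L'=\bigl\{\bigl(\begin{smallmatrix}a&0\\0&0\end{smallmatrix}\bigr)\bigr\}$). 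What does work is one further reduction on the $H_+$ side: pick $M_0$ in the relative interior of $\upperCone$ in $L'$; the standard face argument shows every $M\in\upperCone$ satisfies $\ker M_0\subseteq\ker M$, so $\upperCone$ lies in the face $F=\{N\succeq0:\ker M_0\subseteq\ker N\}$ of $H_+$, and $M_0$ is in the interior of $F$ relative to $\spn F$. Now Slater holds for $L'\cap F$ inside $\spn F$, strong conic duality applies there, and the resulting $U_k\in F\subseteq H_+$ does the job. In short: restrict the subspace \emph{and} the cone (facial reduction), not just the subspace.

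Your final remark about entries in $\kk$ is well taken and is a genuine bookkeeping issue when $\kk\subsetneq\RR$: the separation/duality step naturally produces $U_k$ over $\RR$, and pulling it back to $\kk$ needs an argument (e.g., that the representing $U_k$ can be chosen as a $\kk$-linear combination of data already over $\kk$). The cited references work over $\RR$, so for the purposes of this paper the point is harmless, but it is worth a sentence if you write the proof out in full generality.
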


For the reader who wishes to know more about the combinatorial optimization point of view, we recommend~\cite{FioriniKaibelPashkovichTheis13}.

\section{Poset embedding ranks}\label{sec:posets}

In this section we give a more combinatorial interpretation of the number $\min\{ \psdrk(S) \mid \supp S = \supp M\}$.

\begin{definition}
  Let $S$ be an $(m\times n)$- matrix.  We define the \textit{poset $\poset(S)$ of $S$} as 
  \begin{equation*}
    \poset(S) := \Bigl(\,  \{0\}\times\{1,\dots,m\} \cup \{1\}\times\{1,\dots,n\}\,,\;\; \preceq  \,\Bigr),
  \end{equation*}
  where
  \begin{equation*}
    (i,k) \preceq (j,\ell) \; :\Leftrightarrow\;\; i=0 \land j=1 \land S(k,\ell) \ne 0.
  \end{equation*}
\end{definition}

In other words, $\poset(S)$ is the poset whose Hasse-diagram is the bipartite graph with lower level vertex set the row set of~$S$ and upper level vertex set the
column set of~$S$, and a vertex~$k$ of the lower level adjacent to a vertex~$\ell$ of the upper level if and only if $S(k,\ell) = 0$.

\begin{definition}
  Let $\poset$, $\otherposet$ be posets.  An \textit{embedding} of $\poset$ into~$\otherposet$ is a mapping $j\colon \poset \to \otherposet$ such that $x \le y \iff
  j(x)\le j(y)$ holds for all $x,y\in\poset$.
\end{definition}

\begin{definition}
  Let $S$ be a matrix, $\setofposets$ a set of posets, and $\gimel\colon \setofposets \to \NN$.  We define the \textit{$\setofposets$-embedding rank} of~$S$ as the
  infimum over all $\gimel(\otherposet)$ such that there exists an embedding of $\poset(S)$ into~$\otherposet$.
\end{definition}

As mentioned in the introduction, the Boolean rank of a Boolean matrix~$S$ is equal to the $\setofposets$-embedding rank of $\poset(S)$, with $\setofposets$ the set
of truncated Boolean lattices $\gimel(\otherposet)$ the number of co-atoms of~$\otherposet$~\cite{FioriniKaibelPashkovichTheis13}.

By a \textit{subspace lattice} we mean the lattice of all linear subspaces of $\kk^q$, for some $q\in\NN$.  If $\otherposet$ is the lattice of all subspaces of
$\kk^q$, then we let $\gimel(\otherposet) := q$.  With $\setofsubspacelattices$ the set of all subspace lattices, it is clear that the
$\setofsubspacelattices$-embedding rank, which we denote by $\embrkl(M)$, equals the minimum dimension of a vector space in which there exist subspaces
$U_1,\dots,U_m$ and $V_1,\dots,V_n$ such that
\begin{equation}\label{eq:def-subspace-lattice-embedding}
  U_k\subseteq V_\ell \text{ if, and only if, } S(k,\ell) = 0.
\end{equation}

In the proof of Theorem~\ref{thm:main}, we will prove \textit{en passant} the following proposition.

\begin{proposition}\label{prop:subspacelatticeembedding}\label{obsolete:rem:sle-from-psdfac}
  For all nonnegative $(m\times n)$-matrices~$S$, we have
  \begin{equation*}
    \embrkl(S) = \min \bigl\{ \psdrk(T) \bigm| \supp(T) = \supp(S) \text{, } T \ge 0 \bigr\}.
  \end{equation*}
  More importantly,
  \begin{enumerate}[(a)]
  \item\label{prop:subspacelatticeembedding:a} Every positive semidefinite factorization $S(k,\ell) = \tr(A_k^*B_\ell)$ gives rise to a subspace-lattice embedding
    of~$S$ of the same size by letting $U_k := \ker A_k$ and $V_\ell := \img B_\ell$.
  \item\label{prop:subspacelatticeembedding:b} If $\kk=\RR$, then in~(a) we may assume that $\dim U_k \le (\sqrt{8n+1})/2$ and $\codim V_\ell \le (\sqrt{8m+1})/2$.
  \end{enumerate}
\end{proposition}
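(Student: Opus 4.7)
The proof rests on one algebraic fact about traces of PSD matrix products, plus the Barvinok--Pataki rank bound on feasible points of spectrahedra for part~(b).

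\emph{Algebraic fact.} For PSD matrices $A, B \in \MM(q \times q)$, $\tr(A^*B) = 0$ if and only if $AB = 0$, equivalently $\img B \subseteq \ker A$. I would prove this by writing $A = C^*C$, so that $\tr(AB) = \|CB^{1/2}\|_F^2$; vanishing of the trace forces $CB^{1/2} = 0$, and hence $AB = 0$.

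\emph{Part~(a) and the upper bound $\embrkl(S) \le \min\{\psdrk(T):\supp T=\supp S\}$.} Given any PSD factorization $T(k,\ell) = \tr(A_k^*B_\ell)$ of size $q$ with $\supp T = \supp S$, set $U_k := \ker A_k$ and $V_\ell := \img B_\ell$ in $\kk^q$. The algebraic fact at once yields $V_\ell \subseteq U_k \iff S(k,\ell) = 0$; via the order-reversing self-duality of the subspace lattice (orthocomplementation), this is the required subspace-lattice embedding of $\poset(S)$ of dimension~$q$. For the reverse inequality $\embrkl(S) \ge \min\{\psdrk(T)\}$, I would start from a subspace-lattice embedding via $U_k, V_\ell$ and take $A_k$ to be the orthogonal projector onto $U_k$ and $B_\ell$ the orthogonal projector onto $V_\ell^\perp$. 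Both are PSD and the algebraic fact gives $\tr(A_k B_\ell) = 0 \iff U_k \subseteq V_\ell \iff S(k,\ell) = 0$, so $T(k,\ell) := \tr(A_k B_\ell)$ is nonnegative, has the same support as $S$, and admits a PSD factorization of size~$q$.

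\emph{Part~(b).} The workhorse is the Barvinok--Pataki theorem: any non-empty spectrahedron $\{X \in \Ss^q_+ : \tr(M_i X) = b_i,\ i = 1, \ldots, N\}$ contains a point of rank $r$ with $r(r+1)/2 \le N$. From any PSD factorization I would sweep twice. First, with all $B_\ell$'s fixed, for each $k$ replace $A_k$ by a minimum-rank point of the spectrahedron carved out by the $n$ affine constraints $\tr(XB_\ell) = S(k,\ell)$; this yields $\rk A_k \le (\sqrt{8n+1}-1)/2$. Second, with the updated $A_k$'s fixed, replace each $B_\ell$ analogously via the $m$ affine constraints $\tr(A_k X) = S(k,\ell)$, obtaining $\rk B_\ell \le (\sqrt{8m+1}-1)/2$. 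Via the identification of~(a) (under orthocomplementation $\dim U_k = \rk A_k$ and $\codim V_\ell = \rk B_\ell$), these rank bounds translate into the asserted ones.

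\emph{Main obstacle.} The delicate step is~(b). One must feed Barvinok--Pataki the correct spectrahedron so that the number of constraints matches the count on the \emph{opposite} side of the factorization (bounding $\rk A_k$ by a function of $n$, not $m$, and vice versa), and the two sweeps must be done in sequence so that the second does not disturb the rank bound secured by the first. A small bookkeeping wrinkle is that the natural identification via kernels and images yields $V_\ell \subseteq U_k$ rather than the containment $U_k \subseteq V_\ell$ appearing in the formal definition of the subspace-lattice embedding; this is reconciled by orthocomplementation and does not affect the substance of the argument.
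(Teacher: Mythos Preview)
Your proof is correct and matches the paper's approach (its Lemmas~4.3, 4.4, and~4.5): the same algebraic fact $\tr(A^*B)=0\iff AB=0$ via a Frobenius-norm square, the same projector construction for the reverse inequality, and the Barvinok--Pataki two-sweep for~(b), whose details the paper explicitly leaves to the reader. Your orthocomplementation remark is apt---the paper's own proof of Lemma~4.4 in fact takes $U_k=\img A_k$ and $V_\ell=\ker B_\ell$, the orthocomplements of what the Proposition statement literally says, so your reconciliation is exactly what is needed (and it is what makes the dimension/codimension bounds in~(b) come out as rank bounds on $A_k$ and $B_\ell$).
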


It will become clear in the proof that, while the minimum in the subspace-lattice embeddedding rank is always attained by (co-)dimension~1 subspaces, this is not
true for the subspace-lattice embedding arising from a positive semidefinite factorization.

The proposition also shows that the situation for positive semidefinite factorizations mirrors that for nonnegative factorizations.
The subspace-lattice embeddedding rank is the minimum ``size'' $\gimel(\otherposet)$ of a poset~$\otherposet$ of a certain type into which $\poset(S)$ can be
embedded.  The importance of such ``poset embedding ranks'' for factorization ranks has been noted before: it is implicit in~\cite{FioriniKaibelPashkovichTheis13}
that the Boolean rank of a boolean matrix~$S$ is equal to the minimum number of co-atoms in a co-atomic poset\footnote{%
  Recall that a poset is co-atomic if every element is a meet of maximal elements.  The maximal elements are then called co-atoms.
} %
into which $\poset(M)$ can be embedded.




\section{Proof of Theorem~\ref{thm:main} and Proposition~\ref{prop:subspacelatticeembedding}}

In this section we prove Theorem~\ref{thm:main} and Proposition~\ref{prop:subspacelatticeembedding}.  For this, we show the following four lemmas.

\begin{lemma}\label{lem:minrk-leminpsdrk}  
  For all nonnegative matrices $S$ we have
  \begin{equation*}
    \psdrk(S) \ge \min \bigl\{ \rk(T) \bigm| \supp(T) = \supp(M) \bigr\}.
  \end{equation*}
\end{lemma}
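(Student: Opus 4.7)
The plan is to turn any positive semidefinite factorization of $S$ of size $q = \psdrk(S)$ into a matrix $T$ with $\supp T = \supp S$ and $\rk T \le q$. The key observation is that for positive semidefinite $q \times q$ matrices $A, B$, one has $\tr(AB) = 0$ if and only if $AB = 0$, equivalently $\img B \subseteq \ker A$; this follows from writing $A = C^*C$ and $B = D^*D$, so that $\tr(AB) = \|DC^*\|_F^2$ and the vanishing of the trace forces $AB = C^*(CD^*)D = 0$. Hence if $S(k,\ell) = \tr(A_k^* B_\ell)$ with $A_k, B_\ell$ positive semidefinite, then setting $U_k := \ker A_k$ and $V_\ell := \img B_\ell \subseteq \kk^q$, the zero-pattern of $S$ is fully encoded by the subspace-inclusion relation
\[
S(k,\ell) = 0 \iff V_\ell \subseteq U_k.
\]

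The next step is to realize this relation as the zero pattern of a rank-$q$ matrix via generic probing. I would pick, for each row index $k$, a vector $u_k \in U_k^\perp \subseteq \kk^q$, and for each column index $\ell$, a vector $v_\ell \in V_\ell \subseteq \kk^q$, and set $T(k,\ell) := \langle u_k, v_\ell \rangle$. If $V_\ell \subseteq U_k$, then $v_\ell \in U_k$, and $\langle u_k, v_\ell \rangle = 0$ automatically. If $V_\ell \not\subseteq U_k$, then the orthogonal projection of $V_\ell$ onto $U_k^\perp$ is a nonzero subspace, hence for generic $u_k$ and $v_\ell$ the inner product $\langle u_k, v_\ell \rangle$ is nonzero. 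Assembling the $u_k$ and $v_\ell$ as the columns of $q \times m$ and $q \times n$ matrices $U$ and $V$ respectively, one gets $T = U^*V$, so $\rk T \le q$.

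The only point needing care is the simultaneous genericity. For each of the finitely many pairs $(k,\ell)$ with $V_\ell \not\subseteq U_k$, the bad locus $\{\langle u_k, v_\ell \rangle = 0\}$ is a proper Zariski-closed subset of the total parameter space $\prod_k U_k^\perp \times \prod_\ell V_\ell$, which is an affine space over~$\kk$. Since $\kk$ is an infinite field (as a subfield of $\CC$), a finite union of proper Zariski-closed subsets of an affine $\kk$-space cannot exhaust its $\kk$-rational points, so one can choose all the $u_k$ and all the $v_\ell$ simultaneously outside every bad locus. This is the only technical point; the main conceptual content lies in the identification of $\supp S$ with the inclusion relation $V_\ell \subseteq U_k$, which is precisely the subspace configuration featured in Proposition~\ref{prop:subspacelatticeembedding}(a), so the same construction will later deliver half of that proposition as well.
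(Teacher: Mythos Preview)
Your argument is correct and essentially coincides with the paper's. The paper sets $T(k,\ell) := \bigl(A_k\xi_k \mid B_\ell\eta_\ell\bigr)$ for generic $\xi_k,\eta_\ell \in \kk^q$ (Lemma~\ref{lem:support-realization}); since $U_k^\perp = \img A_k$ and $V_\ell = \img B_\ell$, this is exactly your choice of generic $u_k \in U_k^\perp$ and $v_\ell \in V_\ell$, just parameterized by preimages rather than by the images themselves. The only cosmetic difference is that you first pass through the subspace-inclusion reformulation $S(k,\ell)=0 \iff V_\ell\subseteq U_k$ (i.e.\ you fold in the content of Lemma~\ref{lem:embrk-le-psdrk}), whereas the paper keeps that separate and argues directly from $A_kB_\ell\ne 0$.
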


\begin{lemma}\label{lem:embrk-le-rk}
  For all matrices $S$
  \begin{equation*}
    \rk(S) \ge \embrkl(S).
  \end{equation*}
  The subspaces $U_k$ in the embedding can be chosen of dimension~$1$, and the subspaces~$V_\ell$ of co-dimension~$1$ (and vice-versa).
\end{lemma}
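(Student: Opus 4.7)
The plan is to construct the subspace-lattice embedding directly from a rank factorization of $S$. Let $d := \rk(S)$ and write $S = AX$ with $A \in \kk^{m\times d}$ and $X \in \kk^{d\times n}$; denote by $a_k \in \kk^d$ the $k$-th row of $A$ and by $x_\ell \in \kk^d$ the $\ell$-th column of $X$, so that $S(k,\ell) = a_k^\Tp x_\ell$. In particular, the zero-pattern of $S$ is encoded by orthogonality: $S(k,\ell) = 0$ iff $a_k \perp x_\ell$.

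Working inside $\kk^d$, I would set $U_k := \spn\{a_k\}$ (of dimension at most $1$) and $V_\ell := x_\ell^{\orth}$ (of codimension at most $1$). Then $U_k \subseteq V_\ell$ is equivalent to $a_k \in x_\ell^{\orth}$, which is equivalent to $a_k^\Tp x_\ell = 0$, i.e.\ to $S(k,\ell) = 0$. This is precisely condition~\eqref{eq:def-subspace-lattice-embedding}, so the collection $(U_k)_k,(V_\ell)_\ell$ gives a subspace-lattice embedding of $S$ inside $\kk^d$, certifying $\embrkl(S) \le d = \rk(S)$. The degenerate cases cause no trouble: if $a_k = 0$ then $U_k = \{0\}$ sits inside every $V_\ell$, consistently with the fact that row $k$ of $S$ vanishes; the column case is dual.

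For the ``vice versa'' statement, the construction dualizes by taking orthogonal complements: set $U_k' := a_k^{\orth}$ (codimension at most $1$) and $V_\ell' := \spn\{x_\ell\}$ (dimension at most $1$). Either one reads the defining condition with inclusion reversed, namely $U_k' \supseteq V_\ell' \iff S(k,\ell)=0$, or equivalently one applies the direct construction of the previous paragraph to the transpose $S^\Tp$; in both readings one uses that the opposite of the subspace lattice of $\kk^q$ is itself (antitonically) the subspace lattice of $\kk^q$, so the two dimension profiles are genuinely interchangeable. I do not anticipate a serious obstacle here: the whole argument is a direct linear-algebra calculation once a rank factorization is fixed, and the only small points to check are the zero-vector degeneracies and the bookkeeping needed to cast the ``vice versa'' as an embedding in the same (rather than the opposite) subspace lattice.
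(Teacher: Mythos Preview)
Your argument is correct and is a close cousin of the paper's, but the implementation differs in a way worth noting. The paper works directly in the row space $W=\spn\{s_1,\dots,s_m\}\subseteq\kk^n$ of $S$: it takes $U_k:=\kk s_k$ and $V_\ell:=\spn\{s_k\mid S(k,\ell)=0\}$, then checks \eqref{eq:def-subspace-lattice-embedding} by observing $V_\ell\subseteq\{x\mid x(\ell)=0\}$. You instead fix a rank factorization $S=AX$ and set $U_k=\spn\{a_k\}$, $V_\ell=x_\ell^{\orth}$ inside $\kk^d$; under the isomorphism $W\cong\kk^d$ sending $s_k\mapsto a_k$, your $U_k$ matches the paper's, but your $V_\ell$ is the full hyperplane $x_\ell^{\orth}$ rather than the possibly smaller subspace $\spn\{a_k\mid a_k^{\Tp}x_\ell=0\}$. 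The payoff is that your construction delivers $\dim U_k\le 1$ and $\codim V_\ell\le 1$ \emph{simultaneously}, whereas the paper's proof as written only gives $\dim U_k\le 1$ and implicitly relies on transposing $S$ for the ``vice versa'' part of the statement. One small remark: since $\kk$ is an arbitrary subfield of $\CC$, it is cleanest to read $x_\ell^{\orth}$ as the kernel of the linear functional $y\mapsto x_\ell^{\Tp}y$ (i.e., with respect to the standard bilinear form) rather than an inner product; this is exactly what your equivalence $a_k\in x_\ell^{\orth}\iff a_k^{\Tp}x_\ell=0$ uses, so no change is needed.
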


\begin{lemma}\label{lem:embrk-ge-minpsdrk}
  For all 0/1 matrices $M$, we have
  \begin{equation*}
     \embrkl(M) \ge \min \bigl\{ \psdrk(T) \bigm| \supp(T) = M \text{, } T \ge 0 \bigr\}.
  \end{equation*}
\end{lemma}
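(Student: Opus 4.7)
The plan is to construct, from a subspace-lattice embedding of $M$ of size $q = \embrkl(M)$, an explicit psd factorization of some nonnegative matrix $T$ with $\supp(T)=M$ and $\psdrk(T)\le q$. So fix subspaces $U_1,\dots,U_m$ and $V_1,\dots,V_n$ of $\kk^q$ realizing the embedding, i.e.\ satisfying \eqref{eq:def-subspace-lattice-embedding}: $U_k\subseteq V_\ell \iff M(k,\ell)=0$.

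The construction is the formal inverse of Proposition~\ref{prop:subspacelatticeembedding}\eqref{prop:subspacelatticeembedding:a}. Let $A_k$ be the orthogonal projector onto $U_k$ (so $A_k$ is psd with $\img A_k=U_k$ and $\ker A_k = U_k^{\perp}$), and let $B_\ell$ be the orthogonal projector onto $V_\ell^{\perp}$ (so $B_\ell$ is psd with $\ker B_\ell = V_\ell$ and $\img B_\ell = V_\ell^{\perp}$). Define
\begin{equation*}
    T(k,\ell) := \tr(A_k^* B_\ell) = \tr(A_k B_\ell).
\end{equation*}
Since both factors are psd, $T(k,\ell)\ge 0$, so $T\ge 0$, and by construction $T$ has a psd factorization of size $q$.

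It remains to verify $\supp(T)=M$. I will use the standard fact that for psd matrices $A,B$, one has $\tr(AB)=0$ iff $AB=0$ iff $\img B \subseteq \ker A$ (writing $A=C^*C$, one computes $\tr(AB)=\|CB^{1/2}\|_F^2$). Applying this,
\begin{equation*}
    T(k,\ell)=0 \iff A_kB_\ell = 0 \iff \img B_\ell \subseteq \ker A_k \iff V_\ell^{\perp}\subseteq U_k^{\perp} \iff U_k\subseteq V_\ell,
\end{equation*}
which by \eqref{eq:def-subspace-lattice-embedding} is equivalent to $M(k,\ell)=0$. Hence $\supp(T)=M$, and therefore
\begin{equation*}
    \min\bigl\{\psdrk(T)\bigm|\supp(T)=M,\ T\ge 0\bigr\}\le\psdrk(T)\le q = \embrkl(M),
\end{equation*}
proving the lemma.

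There is essentially no hard step here: the whole argument is driven by the dual correspondence between containment $U_k\subseteq V_\ell$ of subspaces and vanishing $\tr(A_kB_\ell)=0$ of the associated projector traces. The only mild care needed is to be sure one picks $\img A_k = U_k$ (not $\ker A_k = U_k$) and $\ker B_\ell = V_\ell$ (not $\img B_\ell=V_\ell$), so that the equivalence comes out on the correct side; this also clarifies the comment following the proposition that the subspaces arising from a psd factorization need not be of dimension or codimension $1$, since $\dim U_k = \rk A_k$ and $\codim V_\ell = \rk B_\ell$ can be any values realizable by the factorization.
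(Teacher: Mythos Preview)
Your proof is correct and essentially identical to the paper's: both take $A_k$ to be the orthogonal projector onto $U_k$ and $B_\ell$ the orthogonal projector onto $V_\ell^\perp$ (the paper writes this as $\Id - B_\ell^\perp$ with $B_\ell^\perp$ the projector onto $V_\ell$), define $T(k,\ell):=\tr(A_k^*B_\ell)$, and use the equivalence $\tr(AB)=0 \Leftrightarrow AB=0$ for psd $A,B$ to check $\supp(T)=M$. Your writeup spells out the chain of equivalences more explicitly than the paper does, but there is no substantive difference.
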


\begin{lemma}\label{lem:embrk-le-psdrk}
  Let~$S$ be a nonnegative matrix.  Every positive semidefinite factorization $S(k,\ell) = \tr(A_k^*B_\ell)$ gives rise to a subspace-lattice embedding of~$S$ of
  the same size by letting $U_k := \ker A_k$ and $V_\ell := \img B_\ell$.
\end{lemma}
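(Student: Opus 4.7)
The plan is to reduce the lemma to a single familiar linear-algebraic fact about positive semidefinite matrices: for $A,B\succeq 0$ in $\MM(q\times q)$, one has $\tr(A^*B)=0$ if and only if $\img B\subseteq \ker A$. Once this is in hand, each trace condition $S(k,\ell)=\tr(A_k^*B_\ell)=0$ of the given PSD factorization translates directly into an inclusion between $U_k=\ker A_k$ and $V_\ell=\img B_\ell$. Since all these subspaces live in the common ambient space $\kk^q$, the size of the resulting subspace-lattice embedding of $\poset(S)$ equals the size $q$ of the factorization.

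First I would verify the orthogonality fact. Using spectral decompositions $A=\sum_i \alpha_i u_iu_i^*$ and $B=\sum_j\beta_j v_jv_j^*$ with $\alpha_i,\beta_j>0$ and with $\{u_i\}$, $\{v_j\}$ orthonormal bases of $\img A$, $\img B$ respectively, a short expansion yields
\[
\tr(AB)=\sum_{i,j}\alpha_i\beta_j\,\lvert u_i^*v_j\rvert^2,
\]
a sum of nonnegative reals. It therefore vanishes iff every inner product $u_i^*v_j$ is zero, i.e.\ iff $\img A\perp \img B$, i.e.\ iff $\img B\subseteq (\img A)^\perp=\ker A$. Since PSD matrices are Hermitian, $A_k^*=A_k$, so the criterion applies to each entry of $S$.

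Applying the criterion pair by pair, the choice $U_k:=\ker A_k$ and $V_\ell:=\img B_\ell$ produces a family of subspaces of $\kk^q$ satisfying the biconditional
\[
V_\ell\subseteq U_k\ \iff\ S(k,\ell)=0,
\]
which is the defining relation~\eqref{eq:def-subspace-lattice-embedding} of a subspace-lattice embedding of $\poset(S)$ (modulo the harmless choice of which side of the bipartite poset is labeled ``$U$'' and which ``$V$''). The ambient dimension is $q$, matching the size of the factorization, which is exactly what the lemma claims.

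I do not foresee any genuine obstacle: the entire argument is the orthogonality-of-images characterization of zero trace for PSD pairs, which is a textbook computation, together with a bit of bookkeeping for the orientation of $\poset(S)$. The finer dimension bounds $\dim U_k\le(\sqrt{8n+1})/2$ and $\codim V_\ell\le (\sqrt{8m+1})/2$ promised in Proposition~\ref{prop:subspacelatticeembedding}(b) go beyond the present lemma and would require a separate compression step exploiting the freedom one has in reshaping a PSD factorization.
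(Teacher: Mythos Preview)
Your proposal is correct and follows essentially the same line as the paper: reduce everything to the equivalence, for positive semidefinite $A,B$, between $\tr(A^*B)=0$ and $AB=0$ (equivalently $\img B\subseteq\ker A$), then read off the subspace inclusions. You supply a spectral-decomposition justification of that equivalence, whereas the paper simply asserts it; and your observation about the orientation of the inclusion is apt, since the paper's own proof silently swaps to $U_k:=\img A_k$, $V_\ell:=\ker B_\ell$ to make the direction of~\eqref{eq:def-subspace-lattice-embedding} come out as written.
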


\begin{lemma}\label{lem:real-psd-factoriz}
  Suppose $\kk=\RR$, and~$S$ is a nonnegative $(m\times n)$-matrix.  If a factorization of~$S$ of size~$q$ exists, then there exists one $S(k,\ell) =
  \tr(A_k^*B_\ell)$ with $\rk A_k \le (\sqrt{8n+1})/2$ and $\rk B_\ell \le (\sqrt{8m+1})/2$ for all $k,\ell$.
\end{lemma}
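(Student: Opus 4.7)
The plan is to apply Pataki's classical rank bound for extreme points of spectrahedra, twice in succession. Recall the lemma: given the set $F$ of $q\times q$ real positive semidefinite matrices $X$ satisfying $N$ affine equations $\tr(C_i X) = b_i$, if $F\ne\emptyset$ then $F$ contains some $X$ with $\rk(X)\cdot(\rk(X)+1)/2 \le N$, equivalently $\rk(X)\le(\sqrt{8N+1}-1)/2 \le (\sqrt{8N+1})/2$. The proof is short and standard: an extreme point $X=UU^\Tp$ of $F$ of rank $r$ lies in the relative interior of the face $\{UCU^\Tp : C\text{ is }r\times r\text{ psd}\}$ of the ambient psd cone, and extremality forces the linear forms $C\mapsto\tr(U^\Tp C_i U\cdot C)$ to span the $r(r+1)/2$-dimensional space of $r\times r$ real symmetric matrices, whence $N\ge r(r+1)/2$.

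Starting from the given psd factorization $(A_k^{(0)},B_\ell^{(0)})$ of $S$ of size $q$, I would first fix the matrices $B_1^{(0)},\dots,B_n^{(0)}$. For each $k\in\{1,\dots,m\}$, the spectrahedron
\begin{equation*}
F_k := \bigl\{A \in \RR^{q\times q} : A\succeq 0,\ \tr(A B_\ell^{(0)}) = S(k,\ell) \text{ for all } \ell=1,\dots,n\bigr\}
\end{equation*}
is nonempty (it contains $A_k^{(0)}$) and is cut out from the psd cone by $n$ affine equations. Pataki's lemma yields some $A_k\in F_k$ with $\rk(A_k)\le (\sqrt{8n+1})/2$. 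Replacing each $A_k^{(0)}$ by this $A_k$ preserves the identities $\tr(A_k B_\ell^{(0)}) = S(k,\ell)$.

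Next, with the new $A_1,\dots,A_m$ fixed, for each $\ell\in\{1,\dots,n\}$ the spectrahedron
\begin{equation*}
G_\ell := \bigl\{B \in \RR^{q\times q} : B\succeq 0,\ \tr(A_k B) = S(k,\ell) \text{ for all } k=1,\dots,m\bigr\}
\end{equation*}
is nonempty: it contains $B_\ell^{(0)}$, precisely because the new $A_k$'s were chosen to satisfy the stage-1 equations against $B_\ell^{(0)}$. A second application of Pataki's lemma produces $B_\ell\in G_\ell$ with $\rk(B_\ell)\le (\sqrt{8m+1})/2$. By construction $\tr(A_k B_\ell) = S(k,\ell)$, so $(A_k,B_\ell)_{k,\ell}$ is a positive semidefinite factorization of $S$ meeting both rank bounds.

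Nothing in the argument is particularly delicate; the only real content is Pataki's lemma, which I would either cite or reprove as above. The one point to watch is that the two reductions must not interfere, which works out because each new $A_k$ automatically satisfies the stage-2 constraints against the old $B_\ell^{(0)}$, so $B_\ell^{(0)}$ survives as a feasibility witness for the second reduction. No restriction on the size $q$ of the original factorization is used, and the rank bounds are independent of $q$, as required.
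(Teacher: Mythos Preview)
Your proof is correct and is precisely the argument the paper has in mind: the paper states the result as ``an easy consequence'' of the Barvinok/Pataki rank bound (Lemma~\ref{lem:eqns-in-real-psd}) and leaves the details to the reader, and those details are exactly the two-stage application you spell out, including the observation that the old $B_\ell^{(0)}$ remain feasible witnesses after the $A_k$'s have been replaced.
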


Theorem~\ref{thm:main} and the equation in Proposition~\ref{prop:subspacelatticeembedding} now follow by sticking together the inequalities.
Proposition~\ref{prop:subspacelatticeembedding}(\ref{prop:subspacelatticeembedding:a}) follows from Lemma~\ref{lem:embrk-le-psdrk}, and
Item\ref{prop:subspacelatticeembedding:b} follows with Lemma~\ref{lem:real-psd-factoriz}.


We start with Lemma~\ref{lem:minrk-leminpsdrk}.  Before we prove it, we note the following easy fact.

\begin{lemma}\label{lem:support-realization}
  Suppose that $S(k,\ell) = \tr A_k^*B_\ell$, $k=1,\dots,m$, $\ell=1,\dots,n$ is a positive semidefinite factorization of~$S$ with matrices of order~$q$.  Then
  there exists a 
  finite union~$H$ of proper sub-varieties of $(\kk^q)^{m+n}$
  such that for any $(\xi_1,\dots,\xi_m,\eta_1,\dots,\eta_n) \in (\kk^q)^{m+n}\setminus H$ we
  have:
  \begin{equation*}
    \bigl( A_k\xi_k \bigm| B_\ell\eta_\ell \bigr) = 0 \quad\iff\quad S(k,\ell) = 0
  \end{equation*}
\end{lemma}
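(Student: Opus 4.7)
The plan is to define, for each pair $(k,\ell)$ with $S(k,\ell)\ne 0$, the set
\begin{equation*}
    H_{k,\ell} := \bigl\{ (\xi_1,\dots,\xi_m,\eta_1,\dots,\eta_n) \in (\kk^q)^{m+n} \bigm| (A_k\xi_k \mid B_\ell\eta_\ell) = 0 \bigr\},
\end{equation*}
and then take $H := \bigcup_{(k,\ell):\, S(k,\ell)\ne 0} H_{k,\ell}$.  This is a union of at most $mn$ sets, so finiteness is automatic.  What remains is to check (a) that each such $H_{k,\ell}$ is a \textit{proper} subvariety, and (b) that on the complement of $H$ both directions of the claimed equivalence hold.

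For the ``$\Leftarrow$'' direction, which is what lets us omit the zero entries of $S$ from the union, the key input is the classical fact that for Hermitian positive semidefinite matrices $A,B$ one has $\tr(A^*B) = 0 \Rightarrow AB = 0$.  (Quick argument: writing $A = C^*C$, one has $\tr(AB) = \tr(CBC^*)$; then $CBC^*$ is positive semidefinite with trace zero, hence zero, forcing $B^{1/2}C^* = 0$, and so $AB = 0$.)  Applied to $A_k, B_\ell$ whenever $S(k,\ell) = \tr(A_k^*B_\ell) = 0$, this gives $A_kB_\ell = 0$, so the inner product $(A_k\xi_k \mid B_\ell\eta_\ell)$, which (up to conjugation) is the bilinear form $\xi_k^* A_k B_\ell \eta_\ell$, vanishes identically in $(\xi_k,\eta_\ell)$.

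For ``$\Rightarrow$'', equivalently for the properness of $H_{k,\ell}$, the contrapositive of the same fact gives $A_k B_\ell \ne 0$ whenever $S(k,\ell) \ne 0$.  Since $(A_k\xi_k \mid B_\ell\eta_\ell)$ is a sesquilinear form in $(\xi_k,\eta_\ell)$ whose Gram matrix is $A_kB_\ell$, taking $\xi_k = e_i$ and $\eta_\ell = e_j$ with $(A_kB_\ell)_{ij} \ne 0$ exhibits a tuple outside $H_{k,\ell}$; hence the defining polynomial is not identically zero, and $H_{k,\ell}$ is a proper subvariety of $(\kk^q)^{m+n}$.

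I do not expect significant difficulty; the only minor bookkeeping obstacle is that if $\kk\not\subseteq\RR$ the inner product carries a conjugation, so ``subvariety'' should be interpreted over the real subfield (or by regarding $\xi_k$ and $\bar\xi_k$ as independent coordinates).  In the use made of this lemma later in the paper one may specialize to $\kk = \RR$, and this complication disappears.
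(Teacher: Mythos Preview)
Your proof is correct and follows essentially the same approach as the paper: define $H$ as the union, over pairs $(k,\ell)$ with $S(k,\ell)\ne 0$, of the zero sets of the forms $(\xi_k \mid A_kB_\ell\eta_\ell)$, and invoke the equivalence $\tr A^*B = 0 \iff AB = 0$ for positive semidefinite $A,B$ to obtain both properness and the reverse implication. You are in fact more explicit than the paper about the ``$\Leftarrow$'' direction and about the conjugation subtlety when $\kk\not\subseteq\RR$; one tiny wording slip is that for properness you want the \emph{converse} (trivially, $AB=0\Rightarrow\tr A^*B=0$) rather than the contrapositive, but this does not affect the argument.
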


In the case of $\kk\in\{\RR,\CC\}$ one can state more easily that~$H$ is a set of Lebesgue-measure zero.

\begin{proof}[Proof of Lemma~\ref{lem:support-realization}]
  To have $\lt( A_k\xi_k \mid B_\ell\eta_\ell \rt) \ne 0$ for all $(k,\ell)$ with $S(k,\ell)\ne 0$, we need to choose $(\xi,\eta)$ which do not satisfy any of the
  following equations:
  \begin{equation*}
    \lt(\xi_k \mid A_kB_\ell\eta_\ell\rt) = 0\text{; \quad $(k,\ell)$ with $S(k,\ell)\ne 0$}.
  \end{equation*}
  Each of these equations defines a proper sub-variety of $(\kk^q)^{m+n}$, since $0\ne S(k,\ell) = \tr A_k^*B_\ell$ implies $A_k B_\ell \ne 0$.  (This is most
  easily seen by realizing that, for $X := \sqrt A$, $Y := \sqrt B$, we have $\tr A^*B = \Nm{XY}^2$ where $\Nm{Z} := \tr Z^*Z$ refers to the Frobenius- (or
  Hilbert-Schmidt-) norm of the matrix~$Z$.)
\end{proof}

We can now complete the proof of Lemma~\ref{lem:minrk-leminpsdrk}.

\begin{proof}[Proof of Lemma~\ref{lem:minrk-leminpsdrk}.]
  We have to show that for every nonnegative real matrix~$S$ there exists a matrix~$T$ with $\supp(T) = \supp(S)$ and $\psdrk S \ge \rk T$.
  
  Let $S$ be nonnegative and real with $\psdrk S = q$, and let $A_k$, $B_\ell$, $\xi_k$ and $\eta_\ell$, $k=1,\dots,m$, $\ell=1\dots,n$ as in
  Lemma~\ref{lem:support-realization}.  The matrix $T$ defined by $T(k,\ell) := \lt( A_k\xi_k \bigm| B_\ell\eta_\ell \rt)$ has the same support as~$S$ and rank at
  most~$q = \psdrk S$.
\end{proof}


\begin{proof}[Proof of Lemma~\ref{lem:embrk-le-rk}.]
  We have to show $\displaystyle \embrkl(S) \le \rk(S)$ for all matrices $S$.  Let $q := \rk S$.  We give subspaces of a $q$-dimensional vector space~$W$
  satisfying~\eqref{eq:def-subspace-lattice-embedding}.

  For $k=1,\dots,m$, denote by $s_k \in \kk^n$ the vector which constitutes the $k$-th row of $S$, i.e., $s_k = S(k,\dots)^\Tp$, and the let $U_k := \kk s_k$, the
  linear subspace of $\kk^n$ generated by~$s_k$.  The ambient space for our construction is $W := \sum_{k=1}^m U_k$, a vector space of dimension~$q$.  For
  $\ell\in\{1,\dots,n\}$, let $K_\ell$ denote the set of columns indices~$k$ with $S(k,\ell) = 0$, and define
  \begin{equation*}
    V_\ell := \sum_{k \in K_\ell} U_k = \spn \bigl\{ s_k \bigm| S(k,\ell) = 0 \bigr\}.
  \end{equation*}
  Clearly, $U_1,\dots,U_m$, $V_1,\dots,V_n$ are linear subspaces of a real vector space of dimension~$q$.  Moreover, by construction, we have $U_k \subseteq V_\ell$
  whenever $S(k,\ell) = 0$.  But since
  \begin{equation*}
    V_\ell \subseteq \{ x\in \kk^n \mid x(k) = 0\;\; \forall\; k\in K_\ell \},
  \end{equation*}
  we have that $S(k,\ell) \ne 0$ implies $U_k \nsubseteq V_\ell$, and we conclude~\eqref{eq:def-subspace-lattice-embedding}.
\end{proof}


\begin{proof}[Proof of Lemma~\ref{lem:embrk-ge-minpsdrk}.]
  We have to show $\displaystyle \embrkl(M) \ge \min \bigl\{ \psdrk(T) \bigm| \supp(T) = M \text{, } T \ge 0 \bigr\}$ for all 0/1 matrices $M$.  For this, from
  subspaces of~$\kk^q$ satisfying~\eqref{eq:def-subspace-lattice-embedding} with $S$ replaced by~$M$, we constuct a matrix~$T$ and a positive semidefinite
  factorization with matrices of order~$q$.

  Let $U_1,\dots,U_m$, $V_1,\dots,V_n$ such a collection of subspaces.  Fix any inner product of $\kk^q$, and denote by $A_k$ the matrix of the orthogonal
  projection of $\kk^q$ onto $U_k$ and by $B^\orth_\ell$ the matrix of the orthogonal projection of $\kk^q$ onto $V_\ell$, by $\Id$ the $q\times q$ identity matrix,
  and let $B_\ell := \Id - B^\orth_\ell$.  Clearly $A_k$ and $B_\ell$ are positive semidefinite, and we have $A_kB_\ell = 0$ if and only if $M(k,\ell) = 0$.  Thus,
  $T$ defined by $T(k,\ell) := \tr A_k^*B_\ell$ is a matrix with $\supp(T) = M$, and $A_k$, $B_\ell$ a positive semidefinite factorization.
\end{proof}


\begin{proof}[Proof of Lemma~\ref{lem:embrk-le-psdrk}.]
  From a positive semidefinite factorization with matrices of order~$q$, we will construct subspaces of~$\kk^q$
  satisfying~\eqref{eq:def-subspace-lattice-embedding}.

  Let a positive semidefinite factorization of~$S$ be given, i.e., let $A_1,\dots,A_m$, $B_1,\dots,B_n$ be $q\times q$ real positive semidefinite matrices with
  $S(k,\ell) = \tr A_k^*B_\ell$.  Now, for positive semidefinite matrices $A$, $B$, the two statements $\tr A^*B =0$ and $AB=0$ are equivalent.  But $A_kB_\ell = 0$
  is equivalent to $U_k := \img A_k \subseteq \ker B_\ell =: V_\ell$.
\end{proof}


\subsection{The case $\kk=\RR$}

For positive semidefinite matrices with real entries, the following is well-known.

\begin{lemma}[E.g.~\cite{barvinok02book}]\label{lem:eqns-in-real-psd}
  Let $A_1,\dots,A_m$ be square matrices, and $\alpha_1,\dots,\alpha_m$ numbers.  If there exists a real positive semidefinite matrix~$X$ such that $\tr(A_j^*X) =
  \alpha_j$ for $j=1,\dots,m$, then there exists such a matrix~$X$ with rank at most $(\sqrt{8m}+1)/2$.
\end{lemma}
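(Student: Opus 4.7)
The plan is to use the classical Barvinok--Pataki dimension-count argument. Take any real positive semidefinite matrix $X$ satisfying $\tr(A_j^*X) = \alpha_j$ for all $j$ and having minimum possible rank~$r$; I will show $r(r+1)/2 \le m$, which implies $r \le (\sqrt{8m+1}-1)/2 \le (\sqrt{8m}+1)/2$. The strategy is to assume $r(r+1)/2 > m$ and find a perturbation of $X$ that stays in the affine slice $\{Y \succeq 0 : \tr(A_j^*Y) = \alpha_j\ \forall j\}$ while strictly decreasing the rank, contradicting the minimality of~$r$.

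First I would factor $X = LL^\Tp$ with $L \in \RR^{q\times r}$ of full column rank, and for each symmetric $r\times r$ matrix~$M$ set
\begin{equation*}
  X_M := L(\Id + M)L^\Tp.
\end{equation*}
By construction $X_M$ is positive semidefinite iff $\Id + M \succeq 0$, and it satisfies the affine constraints iff
\begin{equation*}
  \tr\bigl((L^\Tp A_j L)\, M\bigr) = 0 \quad\text{for } j=1,\dots,m.
\end{equation*}
This is a homogeneous linear system of $m$ equations in the $\binom{r+1}{2}$-dimensional space of real symmetric $r \times r$ matrices. If $r(r+1)/2 > m$, the system has a nonzero solution~$M_0$.

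Next I would slide along the one-parameter family $t \mapsto X_{tM_0}$. Since $M_0 \ne 0$ is symmetric, either $M_0$ or $-M_0$ has a strictly negative eigenvalue; after replacing $M_0$ by $-M_0$ if necessary, let $\lambda < 0$ be its most negative eigenvalue and set $t^* := -1/\lambda > 0$. Then $\Id + t^*M_0 \succeq 0$ with a nontrivial kernel, so $X_{t^*M_0}$ is positive semidefinite, still satisfies every $\tr(A_j^* \cdot) = \alpha_j$, and has rank strictly less than~$r$ (because $L$ has full column rank, $\rk(X_{t^*M_0}) = \rk(\Id + t^*M_0) < r$). This contradicts the choice of~$X$, establishing $r(r+1)/2 \le m$.

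The only delicate point is making sure that some boundary hit occurs at a positive~$t$ and that the rank genuinely drops there; both are handled by the negation trick and by the fact that $L$ has full column rank, so I do not expect any real obstacle. A brief remark would note that this is a well-known result (cf.~\cite{barvinok02book}), stated here in the form needed for Proposition~\ref{prop:subspacelatticeembedding}\eqref{prop:subspacelatticeembedding:b}.
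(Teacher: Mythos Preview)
Your argument is correct and is precisely the classical Barvinok--Pataki dimension-count proof. Note, however, that the paper does not actually prove this lemma: it is stated with a citation to~\cite{barvinok02book} and treated as a known result, so there is no ``paper's own proof'' to compare against. What you have written is essentially the proof one finds in that reference, so your proposal is entirely appropriate as a self-contained justification.

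One small cosmetic point: you write $r(r+1)/2 \le m$, but strictly speaking the contradiction only rules out $r(r+1)/2 > m$, giving $r(r+1)/2 \le m$ as you state; from there $r \le (\sqrt{8m+1}-1)/2$ follows, and since $\sqrt{8m+1}-\sqrt{8m} \le 1$ this is indeed at most $(\sqrt{8m}+1)/2$, matching the lemma's bound. You handle this correctly.
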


\begin{proof}[Proof of Lemma~\ref{lem:real-psd-factoriz}.]
  This lemma is an easy consequence of Lemma~\ref{lem:eqns-in-real-psd}.  We leave the easy details to the reader.
\end{proof}

\subsection{A corollary}
We close this section by stating the following combinatorial corollary of Theorem~\ref{thm:main}

\begin{corollary}\label{cor:triangular-rk-lb}
  Let $S$ be a nonnegative matrix.  The triangular rank of~$S$ is a lower bound to the positive semidefinite rank of $S$.
  \qed
\end{corollary}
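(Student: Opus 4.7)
The plan is to derive Corollary~\ref{cor:triangular-rk-lb} from Theorem~\ref{thm:main} by the standard observation that the triangular rank depends only on the support and is a lower bound on the usual rank over any field.

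First I would recall the definition of the triangular rank: it is the largest $t$ for which there exist row indices $k_1,\dots,k_t$ and column indices $\ell_1,\dots,\ell_t$ such that $S(k_i,\ell_i)\ne 0$ for all $i$ and $S(k_i,\ell_j)=0$ whenever $i>j$ (equivalently, after reordering, a $t\times t$ submatrix whose support is upper-triangular with nonzero diagonal). Two features of this definition matter here. First, it depends solely on $\supp(S)$: if $T$ is any matrix with $\supp(T)=\supp(S)$, then the same row/column indices witness the same triangular rank for $T$. Second, for any such $T$ the $t\times t$ triangular submatrix has determinant equal to the product of its nonzero diagonal entries, hence $\rk(T)\ge t$.

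Now I would assemble the argument. Let $t$ denote the triangular rank of $S$. Since $S$ itself belongs to the set $\{T\ge 0 : \supp(T)=\supp(S)\}$, we have
\begin{equation*}
\psdrk(S)\;\ge\;\min\bigl\{\psdrk(T)\bigm|\supp(T)=\supp(S),\;T\ge 0\bigr\}.
\end{equation*}
By Theorem~\ref{thm:main} applied with $M=\supp(S)$, the right-hand side equals $\min\{\rk(T):\supp(T)=\supp(S)\}$. By the two observations in the previous paragraph, every such $T$ satisfies $\rk(T)\ge t$. Chaining the inequalities yields $\psdrk(S)\ge t$, which is the claim.

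There is no genuine obstacle here: the proof is a one-line consequence of Theorem~\ref{thm:main} together with the textbook fact that triangular rank is a support-only lower bound on ordinary rank. The only thing worth a sentence of care is the trivial remark that $\supp(S)$ is a 0/1 matrix, so Theorem~\ref{thm:main} is directly applicable.
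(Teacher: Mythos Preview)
Your proposal is correct and matches the paper's intent: the corollary is stated with an immediate \qed as a direct consequence of Theorem~\ref{thm:main}, and you have spelled out exactly the intended one-line derivation (triangular rank is a support-only lower bound on ordinary rank, hence bounds the right-hand side of Theorem~\ref{thm:main}). If anything, you could shorten slightly by invoking Lemma~\ref{lem:minrk-leminpsdrk} directly rather than the full equality of Theorem~\ref{thm:main}, but this is cosmetic.
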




\section{Outlook}
As we have shown, support-based lower bounds on the positive semidefinite rank of a matrix will always be at most the rank.  (In fact, one might wonder whether the
rank of a matrix is always an upper bound on its positive semidefinite rank, but for each~$r\ge 3$, Corollary~4.16 in~\cite{GouveiaParriloThomas12} gives families
of matrices with rank~$r$ and unbounded positive semidefinite rank.)
We illustrate how lower bounds which move beyond considering the support might be based on subspace-lattice embeddings via
Proposition~\ref{prop:subspacelatticeembedding}.


\begin{example}
  With $\kk:=\RR$, consider the $(n\times n)$-matrix $S_n$ where $S_n(i,j)= (i-j-1)(i-j-2)/2$.
  We have $\rk S_n = 3$ for all $n$, which follows from the expansion
 \begin{equation*}
    (i-j-1)(i-j-2) = (i^2-3i+1) + (j^2+3j+1) - (2ij),
\end{equation*}
as each term in parenthesis can be expressed as a rank one matrix.  

  We conjecture that the positive semidefinite rank of $S_n$ grows unboundedly with~$n$.  (Note that the bound in~\cite[Corollary~4.16]{GouveiaParriloThomas12} does
  not apply since~$S_n$ is not the slack-matrix of a polytope.) We can prove the following.
  \begin{claim*}
    If $n \ge 6$, the positive semidefinite rank of~$S_n$ is at least~4.
  \end{claim*}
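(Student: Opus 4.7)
The strategy is a proof by contradiction: assume $\psdrk(S_n)\le 3$ and derive an impossible algebraic identity. I split the argument into a dimension analysis (Step~1) and a small determinantal computation (Step~2).

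\emph{Step 1 (dimensions in $\RR^3$).} From a hypothetical size-$3$ factorization $S_n(k,\ell)=\tr(A_k^*B_\ell)$, Lemma~\ref{lem:embrk-le-psdrk} produces subspaces $U_k:=\img A_k$ and $V_\ell:=\ker B_\ell$ of $\RR^3$ with $U_k\subseteq V_\ell\iff S_n(k,\ell)=0$. Since every row $k\in\{3,\dots,n\}$ has its only zeros at $\ell=k-1,\,k-2$, we get $U_k\subseteq V_{k-1}\cap V_{k-2}$ and $U_k\not\subseteq V_\ell$ for other $\ell$. A case analysis --- ruling out $V_{k-1}=V_{k-2}$ (which would force $V_{k-1}\supseteq U_{k-1}$, contradicting $S_n(k-1,k-1)=1$), strict inclusions (ruled out analogously), and a $1$-dimensional $V_{k-1}$ or $V_{k-2}$ (which would force two consecutive $U_k$'s to coincide, contradicting e.g.\ $S_n(k+1,k-2)=1$) --- forces $V_{k-1}$ and $V_{k-2}$ to both be $2$-dimensional, hence $U_k$ is the $1$-dimensional line $V_{k-1}\cap V_{k-2}$. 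A symmetric analysis on columns gives $\dim V_\ell=2$ for every $\ell\le n-2$.

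\emph{Step 2 (rank-one factors and determinantal contradiction).} With $\img A_k$ a line for $k\ge 3$ and $\ker B_\ell$ a plane for $\ell\le n-2$, the factors are rank one: $A_k=\alpha_k u_k u_k^\top$ and $B_\ell=\beta_\ell w_\ell w_\ell^\top$ with $\alpha_k,\beta_\ell>0$. Consequently
\[
  S_n(k,\ell)=\alpha_k\beta_\ell\,(u_k^\top w_\ell)^2\qquad(3\le k\le n,\ 1\le\ell\le n-2),
\]
and $T(k,\ell):=u_k^\top w_\ell\in\RR$ defines a matrix with $\rk T\le 3$. For $n\ge 6$ consider its $4\times 4$ sub-block with $k\in\{3,4,5,6\}$ and $\ell\in\{1,2,3,4\}$. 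The support of this block permits exactly three non-vanishing permutations (signs $+1,-1,-1$); computing their products via $|T(k,\ell)|=\sqrt{S_n(k,\ell)/(\alpha_k\beta_\ell)}$ and extracting a common factor $1/\sqrt{\Pi}$, the condition $\det T=0$ reduces to
\[
  3\,\varepsilon_a=\sqrt{6}\,\varepsilon_b+\sqrt{3}\,\varepsilon_c,\qquad \varepsilon_a,\varepsilon_b,\varepsilon_c\in\{\pm 1\}.
\]
Squaring yields $9=9+6\sqrt{2}\,\varepsilon_b\varepsilon_c$, forcing $\varepsilon_b\varepsilon_c=0$ --- impossible. Hence $\det T\ne 0$, contradicting $\rk T\le 3$.

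The hardest part is Step~1: the pure combinatorics of subspace containments in $\RR^3$ cannot by itself yield the bound $4$, since $\embrkl(S_n)\le\rk(S_n)=3$ by Lemmas~\ref{lem:embrk-le-rk}--\ref{lem:embrk-ge-minpsdrk}. The case analysis must therefore be sharp enough to pin down that the factor matrices on the chosen block are all rank-one, so that the concrete \emph{values} of $S_n$ --- not merely its zero pattern --- enter as square-roots and participate in the arithmetic contradiction. The final clean identity rests on the $\QQ$-linear independence of $1$ and $\sqrt{2}$.
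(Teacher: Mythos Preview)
Your argument is correct and follows the same two-stage strategy as the paper: first a dimension analysis in $\RR^3$ forcing the relevant factors $A_k$ and $B_\ell$ to be rank one, then an impossibility for the resulting ``square-root'' matrix $T$ of inner products.

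The differences are in execution. Your Step~1 reaches the same conclusion as the paper but via a slightly more roundabout case analysis (the ``strict inclusions'' clause is unnecessary once you know $\dim V_\ell\le 2$; the paper argues more directly that $\dim U_k\ge 2$ would force $V_{k-1}=V_{k-2}$, contradicting that those columns have different zero patterns, and dualizes). Your Step~2, on the other hand, is a genuine improvement over the paper's: rather than enumerating all $2^9$ sign patterns of $Y$ and checking each has rank at least~4, you compute a single $4\times 4$ minor explicitly. The three surviving permutation terms have absolute values $3,\sqrt6,\sqrt3$ (after factoring out the common $1/\sqrt{\Pi}$), and the vanishing of the determinant would force $9=9+6\sqrt2\,\varepsilon_b\varepsilon_c$, which is impossible by the irrationality of $\sqrt2$. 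This replaces a computer check by a one-line algebraic contradiction and makes transparent \emph{why} the enumeration succeeds.
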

  
  \begin{proof}[Proof of the claim.]
    By considering the upper-left $6\times 6$ submatrix, it suffices to prove the claim for $n=6$:
    \begin{equation*}
      S_6 =
      \begin{bmatrix}
        1 & 3 & 6 &10 &15 &21 \\
        0 & 1 & 3 & 6 &10 &15 \\
        0 & 0 & 1 & 3 & 6 &10 \\
        1 & 0 & 0 & 1 & 3 & 6 \\
        3 & 1 & 0 & 0 & 1 & 3 \\
        6 & 3 & 1 & 0 & 0 & 1 
      \end{bmatrix}
    \end{equation*}
    By contradiction, assume that $A_1,\dots,A_6$, $B_1,\dots,B_6$ is a positive semidefinite factorization of~$S_6$ of order~$3$.
    
    Let $U_k$, $V_\ell$ be subspaces of $\RR^3$ as in Proposition~\ref{prop:subspacelatticeembedding}.  Since for $k\ge 3$, the $k$th row contains zeros and
    non-zeros, we have $\dim U_k \ge 1$ for these~$k$.  For the same reason, we have $\dim V_\ell \le 2$ for $\ell \le 4$.  If we had $\dim U_k = 2$ for any $k\ge
    3$, then, for $\ell,\ell'$ with $S_6(k,\ell)=S_6(k,\ell')=0$, it would follow that $V_\ell = V_{\ell'}$, which is impossible since the $\ell$th column differs
    from the $\ell'$th.  Thus we conclude that $\dim U_k = 1$ for $k\ge 3$.  Similarly, we have $\dim V_\ell = 2$ for $\ell \le 4$.
    
    But this means that $A_k$, $k\ge 3$, and $B_\ell$, $\ell \le 4$, are rank-1 matrices.  Choose vectors $u_k,v_\ell \in \RR^3$, $k=3,\dots,6$, $\ell=1,\dots,4$,
    such that $A_k = u_ku_k^\Tp$, and $B_\ell = v_\ell v_\ell^\Tp$.  For these $k,\ell$, we have
    \begin{equation*}
      S_6(k,\ell) = \tr(u_ku_k^\Tp v_\ell v_\ell^\Tp) = (u_k^\Tp v_\ell)^2 = Y(k,\ell)^2,
    \end{equation*}
    where we define the rank-3 matrix $Y(k,\ell) := u_k^\Tp v_\ell$.  Since $Y(k,\ell) = \pm \sqrt{S_6(k,\ell)}$, we may enumerate all the $2^9$ possible choices
    for~$Y$.  Doing this, we see that all possible choices for~$Y$ have rank at least~4, so no such~$Y$ can exist, a contradiction.
    (We note that, independently, the technique based on entry-wise square roots has been used and further developed in~\cite{GouveiaRobinsonThomas13}.)
  \end{proof}
\end{example}

This example shows how using additional structure of a positive semidefinite factorization---for example that if~$S$ has a rank-one semidefinite factorization of
dimension~$k$ then there is a matrix~$Y$ of rank~$k$ whose entrywise square is~$S$---can lead to improved lower bounds.  The following concrete problems motivate
finding more general methods that can show positive semidefinite rank lower bounds larger than the rank.



For a real matrix~$S$, can the positive semidefinite rank over $\kk:=\RR$ be larger than the positive semidefinite rank over $\kk := \CC$?  This mirrors the
corresponding problem posed by Cohen \& Rothblum~\cite[Section~5]{CohenRothblum93} (cf.~\cite{BeasleyLaffey09}) regarding the nonnegative rank over the reals of
rational matrices.



\section*{Acknowledgments}

The second author would like to thank Adam N.~Letchford for discussions about specific positive semidefinite formulations of various optimization problems.  The first author would like to thank Ronald de Wolf for 
helpful discussions and pointing out the connection to nondeterministic quantum communication complexity.



\providecommand{\bysame}{\leavevmode\hbox to3em{\hrulefill}\thinspace}
\providecommand{\MR}{\relax\ifhmode\unskip\space\fi MR }
\providecommand{\MRhref}[2]{%
  \href{http://www.ams.org/mathscinet-getitem?mr=#1}{#2}
}
\providecommand{\href}[2]{#2}

\end{document}